\newtheorem{thm}{Theorem}[section]
\newtheorem{cor}[thm]{Corollary}
\numberwithin{equation}{section}
\begin{document}

\title{\bf Chen's inequalities for submanifolds in $(\kappa,\mu)$-contact space form with generalized semi-symmetric non-metric connections}
\author{Yong Wang}

\thanks{{\scriptsize
\hskip -0.4 true cm \textit{2010 Mathematics Subject Classification:}
53C40; 53C42.
\newline \textit{Key words and phrases:} Chen's inequalities; $(\kappa,\mu)$-contact space form; generalized semi-symmetric non-metric connection }}

\maketitle

\begin{abstract}
In this paper, we obtain Chen's inequalities for submanifolds in $(\kappa,\mu)$-contact space form with two kinds of generalized semi-symmetric non-metric connections.

\end{abstract}

\vskip 0.2 true cm

%------------------------------------------------------------------------------------%

\pagestyle{myheadings}
\markboth{\rightline {\scriptsize Wang}}
         {\leftline{\scriptsize Chen inequalities for submanifolds in $(\kappa,\mu)$-contact space form}}

\bigskip
\bigskip

%------------------------------------------------------------------------------------%
%------------------------------------------------------------------------------------%

\section{ Introduction}

H. A. Hayden introduced the notion of a semi-symmetric metric connection on a
Riemannian manifold \cite{HA}. K. Yano studied a Riemannian manifold endowed with
a semi-symmetric metric connection \cite{YA}. Some properties of a Riemannian manifold
and a hypersurface of a Riemannian manifold with a semi-symmetric metric
connection were studied by T. Imai \cite{I1,I2}. Z. Nakao \cite{NA} studied submanifolds of
a Riemannian manifold with semi-symmetric metric connections. N. S. Agashe and
M. R. Chafle introduced the notion of a semisymmetric non-metric connection and
studied some of its properties and submanifolds of a Riemannian manifold with a
semi-symmetric non-metric connection \cite{AC1,AC2}. In \cite{QW}, we introduced a kind of generalized semi-symmetric non-metric connection
as a generalization of  a semi-symmetric metric connection and a semi-symmetric non-metric connection and we studied the Einstein warped products and multiply warped
products with this generalized semi-symmetric non-metric connection. In \cite{LHZ}, Li, He and Zhao introduced anothor kind of generalized semi-symmetric non-metric connection.
They studied submanifolds in a Riemannian manifold with this kind of generalized semi-symmetric non-metric connection.

On the other hand, one of the basic problems in submanifold theory is to find
simple relationships between the extrinsic and intrinsic invariants of a submanifold.
B. Y. Chen \cite{BYC0,BYC1,BYC2,BYC3,BYC4} established inequalities in this respect, called Chen inequalities.
Afterwards, many geometers studied similar problems for different submanifolds in
various ambient spaces; see, for example, \cite{ALV,DDVV,DS1,GKKT,JW1,JW2,JZ,LLV,LSV,ZZS}.
 In \cite{MO1,OM}, Mihai and $\ddot{{\rm O}}$zg$\ddot{{\rm u}}$r studied Chen inequalities for submanifolds of real
space forms with a semi-symmetric metric connection and a semi-symmetric non-metric connection, respectively.
Later, in \cite{ZZS}, Zhang, Zhang and Song obtained Chen's inequalities for submanifolds of a
Riemannian manifold of quasi-constant curvature endowed with a semi-symmetric
metric connection.  At the meantime, in \cite{ZPZ}, Zhang {\it et al.} obtained Chen's inequalities for submanifolds of a
Riemannian manifold of nearly quasi-constant curvature endowed with a semi-symmetric
non-metric connection. In \cite{MO2}, Mihai and $\ddot{{\rm O}}$zg$\ddot{{\rm u}}$r studied Chen inequalities for submanifolds of complex space forms and Sasakian space forms with a semi-symmetric metric connection.
In \cite{AHTZ}, Ahmad, He, Tang and Zhao obtained Chen's inequalities for submanifolds in $(\kappa,\mu)$-contact space form with a semi-symmetric metric connection. In \cite{ASL}, Ahmad, Shahzad and Li obtained Chen's inequalities for submanifolds in $(\kappa,\mu)$-contact space form with a semi-symmetric non-metric connection.

The paper is organized as follows.
In Section 2, we give a brief introduction about two kinds of generalized semi-symmetric non-metric connections.
In Section 3, we establish Chen's inequalities for submanifolds in $(\kappa,\mu)$-contact space form with the first generalized semi-symmetric non-metric connection.
In Section 4, we establish Chen's inequalities for submanifolds in $(\kappa,\mu)$-contact space form with the second generalized semi-symmetric non-metric connection.

%------------------------------------------------------------------------------------%

\vskip 1 true cm

\section{ Preliminaries}

Let $N^{n+p}$ be an $(n+p)$-dimensional Riemannian manifold with Riemannian metric $g$ and the Levi-Civita connection $\widehat{\overline{\nabla}}$.
Let ${\overline{\nabla}}$ be a linear connection defined by
\begin{equation} \label{P1}
\overline{\nabla}_XY=\widehat{\overline{\nabla}}_XY+\lambda_1\pi(Y)X-\lambda_2g(X,Y)P,
\end{equation}
where $P$ is a vector field on $N$ and $\pi(X)=g(P,X).$ We call $\overline{\nabla}$ as the first generalized semi-symmetric non-metric connection.
When $\lambda_1=\lambda_2=1$, $\overline{\nabla}$ is the semi symmetric metric connection and when
$\lambda_1=1$, $\lambda_2=0$, $\overline{\nabla}$ is the semi symmetric non-metric connection. Let
$$
\overline{R}(X,Y)Z:=\overline{\nabla}_X\overline{\nabla}_YZ-\overline{\nabla}_Y\overline{\nabla}_XZ-\overline{\nabla}_{[X,Y]}Z
$$
 be the curvature tensor of $\overline{\nabla}$. The curvature tensor of $\widehat{\overline{\nabla}}$, say $\widehat{\overline{R}}$, can be defined as the same.
Let
\begin{equation*}
\alpha(X,Y)=(\widehat{\overline{\nabla}}_X\pi)(Y)-\lambda_1\pi(X)\pi(Y)+\frac{\lambda_2}{2}g(X,Y)\pi(P),
\end{equation*}
and
\begin{equation*}
\beta(X,Y)=\frac{\pi(P)}{2}g(X,Y)+\pi(X)\pi(Y)
\end{equation*}
 be two $(0,2)$-tensors, then we have
\begin{align} \label{P1}
\overline{R}(X,Y,Z,W)&=\widehat{\overline{R}}(X,Y,Z,W)+\lambda_1\alpha(X,Z)g(Y,W)-\lambda_1\alpha(Y,Z)g(X,W)\notag\\
&~+\lambda_2g(X,Z)\alpha(Y,W)-\lambda_2g(Y,Z)\alpha(X,W)+\lambda_2(\lambda_1-\lambda_2)g(X,Z)\beta(Y,W)\\
&~-\lambda_2(\lambda_1-\lambda_2)g(Y,Z)\beta(X,W).\notag
\end{align}

For simplicity, we denote by ${\rm tr}(\alpha)=\lambda$ and ${\rm tr}(\beta)=\mu$.

Let $M^n$ be an $n$-dimensional submanifold of an $(n+p)$-dimensional Riemannian manifold $N^{n+p}$. On the submanifold $M$
we consider the induced generalized semi-symmetric non-metric connection denoted by $\nabla$ and the induced Levi-Civita connection denoted by $\widehat{\nabla}$. Let $R$ and $\widehat{R}$ be the curvature tensors
of $\nabla$ and $\widehat{\nabla}$. Decomposing the vector field $P$ on $N$ uniquely into its tangent and normal components $P^\top$ and $P^\bot$, respectively, then we have $P=P^\top+P^\bot$.
The Gauss formulas with respect to $\nabla$ and $\widehat{\nabla}$ can be written as:
\begin{equation*}
\overline{\nabla}_XY=\nabla_XY+h(X,Y),~~~~~X,Y\in \Gamma(TM),
\end{equation*}
\begin{equation*}
\widehat{\overline{\nabla}}_XY=\widehat{\nabla}_XY+\widehat{h}(X,Y),~~~~~X,Y\in \Gamma(TM),
\end{equation*}
where $\widehat{h}$ is the second fundamental form of $M$ in $N$ and
\begin{equation*}
h(X,Y)=\widehat{h}(X,Y)-\lambda_2g(X,Y)P^\bot.
\end{equation*}
In $N^{n+p}$ we can choose a local orthonormal frame $e_1,\cdots,e_n,e_{n+1},\cdots,e_{n+p},$ such that, restricting to $M$, $e_1,\cdots,e_n$ are tangent to $M^n$. We write $h_{ij}^r=g(h(e_i,e_j),e_r)$.
The squared length of $h$ is $||h||^2=\sum_{i,j=1}^ng(h(e_i,e_j),h(e_i,e_j))$ and the mean curvature vector of $M$ associated to $\nabla$ is $H=\frac{1}{n}\sum_{i=1}^nh(e_i,e_i).$ Similarly, the
mean curvature vector of $M$ associated to $\widehat{\nabla}$ is $\widehat{H}=\frac{1}{n}\sum_{i=1}^n\widehat{h}(e_i,e_i).$
We have the Gauss equation
\begin{align} \label{P2}
\overline{R}(X,Y,Z,W)&=R(X,Y,Z,W)-g(h(X,W),h(Y,Z))+g(h(Y,W),h(X,Z))\notag\\
&+(\lambda_1-\lambda_2)g(h(Y,Z),P)g(X,W)+(\lambda_2-\lambda_1)g(h(X,Z),P)g(Y,W).
\end{align}
\indent Let $\Pi\subset T_xM$, $x\in M$, be a $2$-plane section and $\Pi={\rm span}\{e_1,e_2\}$ where $e_1,e_2$ are orthonormal basis of $\Pi$. Since $R(X,Y,Z,W)\neq -R(X,Y,W,Z)$, we define $K(\Pi)$ the sectional curvature of $M$ with the induced connection $\nabla$ as follows:
\begin{equation} \label{P3}
K(\Pi)=\frac{1}{2}[R(e_1,e_2,e_2.e_1)-R(e_1,e_2,e_1.e_2)],
\end{equation}
then $K(\Pi)$ is independent of the choice of $e_1,e_2$.
For any orthonormal basis $\{e_1,\cdots,e_n\}$ of the tangent space
$T_xM$, the scalar curvature $\tau$ at $x$ is defined by
\begin{equation}\label{P4}
\tau(x)=\sum_{1\leq i<j\leq n}K(e_i\wedge e_j)=\frac{1}{2}\sum_{1\leq i,j\leq n}R(e_i,e_j,e_j,e_i).
\end{equation}

Let ${\overline{\widetilde{\nabla}}}$ be a linear connection called the second generalized semi-symmetric non-metric connection defined by
\begin{equation} \label{P1}
\overline{\widetilde{\nabla}}_XY=\widehat{\overline{\nabla}}_XY+a\pi(X)Y+b\pi(Y)X.
\end{equation}
 Let
$
\overline{\widetilde{R}}$
be the curvature tensor of $\overline{\widetilde{\nabla}}$.

Let $(\widehat{\overline{\nabla}}_X\pi)(Y)=\alpha'(X,Y)$ be a $(0,2)$-tensor.
The following equality states the relationship between $\overline{\widetilde{R}}$ and $\widehat{\overline{R}}$ (see \cite{LHZ} (7)):

\begin{align} \label{P2}
\overline{\widetilde{R}}(X,Y,Z,W)&=\widehat{\overline{R}}(X,Y,Z,W)-a\alpha'(Y,X)g(Z,W)+a\alpha'(X,Y)g(Z,W)\\
&-b\alpha'(Y,Z)g(X,W)+b\alpha'(X,Z)g(Y,W)\notag\\
&+b^2\pi(Y)\pi(Z)g(X,W)-b^2\pi(X)\pi(Z)g(Y,W)
.\notag
\end{align}

We denote by ${\rm tr}(\alpha')=\lambda'$.
The Gauss formulas with respect to $\widetilde{\nabla}$ can be written as:
\begin{equation*}
\overline{\widetilde{\nabla}}_XY=\widetilde{\nabla}_XY+\widetilde{h}(X,Y),~~~~~X,Y\in \Gamma(TM).
\end{equation*}
Then
\begin{equation*}
\widetilde{h}(X,Y)=\widehat{h}(X,Y).
\end{equation*}
By (25) in \cite{LHZ}, we have the Gauss equation
\begin{align} \label{P2}
\overline{\widetilde{R}}(X,Y,Z,W)&=\widetilde{R}(X,Y,Z,W)-g(\widetilde{h}(X,W),\widetilde{h}(Y,Z))+g(\widetilde{h}(Y,W),\widetilde{h}(X,Z))\notag\\
&+b\pi(\widetilde{h}(Y,Z))g(X,W)-b\pi(\widetilde{h}(X,Z))g(Y,W).
\end{align}

\vskip 1 true cm

\section{Chen inequalities for submanifolds in $(\kappa,\mu)$-contact space form with the first generalized semi-symmetric non-metric connection.}

 \quad A $(2m+1)$-dimensional Riemannian manifold $(N^{2m+1},g)$ has an almost contact metric structure if it admits a $(1,1)$-tensor field $\varphi$, a vector field $\xi$ and $1$-form $\eta$ satisfying:\\
 $$\varphi^2X=-X+\eta(X)\xi,~~\eta(\xi)=1,~~\varphi\xi=0,~~\eta\circ\varphi=0$$
 $$g(X,\varphi Y)=-g(\varphi X,Y),~~g(X,\xi)=\eta(X),$$
 for any vector fields $X,Y$ on $TN$. Let $\Phi$ denote the fundamental $2$-form in $N$, given by $\Phi(X,Y)=g(X,\varphi Y)$, for all $X,Y$ on $TN$. If $\Phi=d\eta$, then $N$ is called {\it a contact metric manifold}.
 In a contact metric manifold $N$, the $(1,1)$-tensor field $h'$ defined by $2h'=\textit{L}_\xi\varphi$ is symmetric and satisfies
 $$h'\xi=0,~~h'\varphi+\varphi h'=0,~~ \widehat{\overline{\nabla}}\xi=-\varphi-\varphi h',~~ {\rm trace}(h')= {\rm trace}(\varphi h')=0.$$
 The $(\kappa,\mu)$-nullity distribution of a contact metric manifold $N$ is a distribution
 $$N(\kappa,\mu):p\rightarrow N_p(\kappa,\mu)=\{Z\in T_pN|\widehat{\overline{R}}(X,Y)Z=\kappa[g(Y,Z)X-g(X,Z)Y]$$
 $$+\mu[g(Y,Z)h'X-g(X,Z)h'Y]\}$$
 where $\kappa$ and $\mu$ are constants. If $\xi\in N(\kappa,\mu)$, $N$ is called a $(\kappa,\mu)$-contact metric manifold
 A plane section $\Pi$ in $T_xN$ is called a {\it $\varphi$-section} if it is spanned by $X$ and $\varphi X$, where $X$ is a unit tangent vector field orthogonal to $\xi$. The sectional curvature of a
$\varphi$-section is called a {\it $\varphi$-sectional curvature.} If  a $(\kappa,\mu)$-contact metric manifold $N$ has constant $\varphi$-section curvature $c$, then it is called a
$(\kappa,\mu)$-contact space form and it is denoted by $N(c)$.
The curvature tensor $\widehat{\overline{R}}$ of $N(c)$ with respect to the Levi-civita connection $\widehat{\overline{\nabla}}$ is expressed by
\begin{align}\widehat{\overline{R}}(X,Y)Z&=\frac{c+3}{4}[g(Y,Z)X-g(X,Z)Y]+\frac{c+3-4\kappa}{4}[\eta(X)\eta(Z)Y-\eta(Y)\eta(Z)X\\
&+g(X,Z)\eta(Y)\xi-g(Y,Z)\eta(X)\xi]+\frac{c-1}{4}[2g(X,\varphi Y)\varphi Z+g(X,\varphi Z)\varphi Y\notag\\
&-g(Y,\varphi Z)\varphi X]+\frac{1}{2}[g(h'Y,Z)h'X-g(h'X,Z)h'Y+g(\varphi h'X,Z)\varphi h'Y\notag\\
&-g(\varphi h'Y,Z)\varphi h'X]-g(X,Z)h'Y+g(Y,Z)h'X+\eta(X)\eta(Z)h'Y-\eta(Y)\eta(Z)h'X\notag\\
&-g(h'X,Z)Y+g(h'Y,Z)X-g(h'Y,Z)\eta(X)\xi+g(h'X,Z)\eta(Y)\xi\notag\\
&+\mu[\eta(Y)\eta(Z)h'X-\eta(X)\eta(Z)h'Y+g(h'Y,Z)\eta(X)\xi-g(h'X,Z)\eta(Y)\xi]
,\notag
\end{align}
for vector fields $X,Y,Z$ on $N(c)$.
Let $M$ be an $n$-dimensional submanifold of a $(2m+1)$-dimensional $(\kappa,\mu)$-contact space form of constant $\varphi$-sectional curvature $N(c)$. For any tangent vector field $X$ to $M$,
we put $$\varphi X=\hat{P}X+FX,$$
where $\hat{P}X$ and $FX$ are tangential and normal components of $\varphi X$ respectively and we decompose $$\xi=\xi^\top+\xi^\bot,$$
where $\xi^\top$ and $\xi^\bot$ denote the tangential and normal parts of $\xi$. Let $(\varphi h')^\top X$ and $(h')^\top X$ denote the tangential parts of $\varphi h'X$ and $h'X$ respectively. We set
$$\vartheta^2=\sum_{i,j=1}^ng(e_i,\vartheta e_j)^2,~~~~~~\vartheta\in\{\widehat{P},(\varphi h')^\top,(h')^\top\}.$$
We denote by $\beta(\Pi)=g^2(\hat{P}e_1,e_2)=g^2(\varphi e_1,e_2)$, where $\{e_1,e_2\}$ is an orthonormal basis of a $2$-plane section $\Pi$. Put
$$\gamma(\Pi)=(\eta(e_1))^2+(\eta(e_2))^2,$$
$$\theta(\Pi)=\eta(e_1)^2g((h')^\top e_2,e_2)+\eta(e_2)^2g((h')^\top e_1,e_1)-2\eta(e_1)\eta(e_2)g((h')^\top e_1,e_2).$$
Then $\gamma(\Pi)$ and $\theta(\Pi)$ do not depend on the choice of orthonormal basis $\{e_1,e_2\}$.
We have the following Chen's inequalities:\\

\begin{thm}
 Let $M^n$, $n\geq 3$, be an $n$-dimensional submanifold of a $(2m+1)$-dimensional $(\kappa,\mu)$-contact space form of constant $\varphi$-sectional curvature $N(c)$ endowed with a connection
$\overline{\nabla}$, then
\begin{align*}
\tau(x)-K(\Pi)&\leq \frac{(n+1)(n-2)}{2}\frac{c+3}{4}+\frac{c+3-4\kappa}{4}[-(n-1)||\xi^\top||^2+\gamma(\Pi)]\\
&+\frac{c-1}{8}[3||\widehat{P}||^2-6\beta(\Pi)]
+\frac{1}{4}[||(\varphi h')^\top||^2-||(h')^\top||^2\\
&+|{\rm trace}((h')^\top)|^2-|{\rm trace}((\varphi h')^\top)|^2]
-\frac{1}{2}[{\rm det}(h'|_{\Pi})-{\rm det}((\varphi h')|_{\Pi})]\\
&+(n-1){\rm tr}((h')^\top)-{\rm tr}(h'|_{\Pi})+(1-\mu)[g(\xi^\top,h'\xi^\top)-{\rm tr}((h')^\top)||\xi^\top||^2\\
&+\theta(\Pi)]-\frac{\lambda_1+\lambda_2}{2}(n-1)\lambda-\frac{\lambda_2}{2}(\lambda_1-\lambda_2)(n-1)\mu\\
&-\frac{\lambda_1-\lambda_2}{2}(n-1)n\pi(H)+\frac{\lambda_1+\lambda_2}{2}{\rm tr}(\alpha|_\Pi)\\
&+\frac{\lambda_2(\lambda_1-\lambda_2)}{2}{\rm tr}(\beta|_\Pi)
+\frac{\lambda_1-\lambda_2}{2}g({\rm tr}(h|_\Pi),P)\\
&+\frac{n^2(n-2)}{2(n-1)}\|H\|^2.
\end{align*}
\end{thm}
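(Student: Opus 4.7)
The plan is to follow B.-Y. Chen's classical optimization strategy adapted to the present connection. First I would combine three ingredients: the Gauss equation (2.3) for $\overline{\nabla}$, the relation (2.2) between $\overline{R}$ and $\widehat{\overline{R}}$, and the explicit $(\kappa,\mu)$-space form curvature (3.1). Substituting the latter two into the former produces, for $X,Y,Z,W\in\Gamma(TM)$, an expression for the induced curvature $R(X,Y,Z,W)$ of $\nabla$ as a sum of (i) $c,\kappa,\mu$-terms evaluated on the tangential pieces $\widehat{P}$, $\xi^{\top}$, $(h')^{\top}$, $(\varphi h')^{\top}$, (ii) the $\alpha,\beta$ correction terms together with the $P$-dependent pieces from the Gauss equation, and (iii) the extrinsic Gauss terms built from $h$.

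Next I would contract twice, using (2.4), to obtain $2\tau(x)=\sum_{i,j}R(e_i,e_j,e_j,e_i)$, and separately use (2.3) together with the definition (2.3) of $K(\Pi)$ on the $2$-plane $\Pi=\mathrm{span}\{e_1,e_2\}$. Subtracting, the double sum in $\tau$ loses its $(i,j)\in\{1,2\}$ contribution, so that the full traces $\mathrm{tr}((h')^{\top})$, $\mathrm{tr}((\varphi h')^{\top})$, $\|\widehat{P}\|^{2}$, $\|\xi^{\top}\|^{2}$, $\lambda=\mathrm{tr}(\alpha)$ and $\mu=\mathrm{tr}(\beta)$ appear paired with their restrictions to $\Pi$. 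These restrictions rewrite in terms of the plane invariants $\beta(\Pi)$, $\gamma(\Pi)$, $\theta(\Pi)$, $\mathrm{tr}(h'|_{\Pi})$, $\mathrm{tr}(\alpha|_{\Pi})$, $\mathrm{tr}(\beta|_{\Pi})$, while the $\varphi$-sectional $c-1$ piece and the bilinear $h'$-piece produce the $2\times 2$ determinants $\det(h'|_{\Pi})-\det((\varphi h')|_{\Pi})$. The constant $(n+1)(n-2)/2\cdot (c+3)/4$ in the theorem arises from the leading $(c+3)/4$ factor of (3.1) after the removal of one $2$-plane from the double contraction.

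Finally, the purely extrinsic quantity $\sum_{r}\bigl[(\sum_{i}h^{r}_{ii})^{2}-\sum_{i,j}(h^{r}_{ij})^{2}\bigr]$ that survives in $\tau(x)-K(\Pi)$ is bounded by Chen's algebraic lemma. For each fixed normal index $r$ I would diagonalize the shape operator on the orthogonal complement of $\Pi$ and set $a_i=h^{r}_{ii}$; the lemma stating that $2a_1 a_2\ge c$ whenever $(\sum a_i)^{2}=(n-1)\bigl(c+\sum a_i^{2}\bigr)$ then contributes exactly the term $\frac{n^{2}(n-2)}{2(n-1)}\|H\|^{2}$. Note that the switch from $h$ to $\widehat{h}$ via $h(X,Y)=\widehat{h}(X,Y)-\lambda_{2}g(X,Y)P^{\bot}$ is absorbed into the $(\lambda_{1}-\lambda_{2})g(h(Y,Z),P)g(X,W)$ type correction already built into (2.3), so no separate $P^{\bot}$-estimate is needed at this stage. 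Collecting every contribution produces the claimed inequality.

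The main obstacle, in my view, is bookkeeping rather than any conceptual step. The curvature tensor (3.1) of a $(\kappa,\mu)$-contact space form contains roughly a dozen distinct terms, and after double contraction each interacts with the non-metric corrections from (2.2) and with the $P^{\bot}$ modification of the second fundamental form; identifying precisely which combinations produce $\gamma(\Pi)$, $\theta(\Pi)$, and the determinant pair $\det(h'|_{\Pi})-\det((\varphi h')|_{\Pi})$ versus which combine into the $(n-1)$-factor trace terms $(n-1)\mathrm{tr}((h')^{\top})$, $(1-\mu)\bigl[g(\xi^{\top},h'\xi^{\top})-\mathrm{tr}((h')^{\top})\|\xi^{\top}\|^{2}\bigr]$, and the $\lambda$-, $\mu$-, $\pi(H)$-contributions, is where the care lies.
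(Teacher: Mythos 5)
Your proposal follows essentially the same route as the paper: substitute (2.2) and (3.1) into the Gauss equation (2.3) to get $R(e_i,e_j,e_j,e_i)$, contract for $\tau$, evaluate $K(\Pi)$ via the symmetrized definition (2.4), and bound the surviving extrinsic term by Chen's algebraic lemma (the paper cites the equivalent Lemma 2.4 of [ZZS]) to produce $\frac{n^2(n-2)}{2(n-1)}\|H\|^2$. The only blemish is a transcription slip where you write the surviving extrinsic quantity as $\sum_r\bigl[(\sum_i h^r_{ii})^2-\sum_{i,j}(h^r_{ij})^2\bigr]$ (that is the term in $2\tau$) rather than $\sum_r\bigl[\sum_{i<j}h^r_{ii}h^r_{jj}-h^r_{11}h^r_{22}-\sum_{i<j}(h^r_{ij})^2+(h^r_{12})^2\bigr]$, but your subsequent use of the lemma and the resulting bound are the correct ones.
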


\begin{proof}
Let $x\in M$ and $\{e_1,\cdots,e_n\}$ and $\{e_{n+1},\cdots,e_{n+p}\}$ be orthonormal basis of $T_xM$ and $T_x^\bot M$ respectively. For $X=W=e_i$, $Y=Z=e_j$, $i\neq j$
by (2.2),(2.3) and (3.1), we have
\begin{align}
{R}(e_i,e_j,e_j,e_i)&=\frac{c+3}{4}+\frac{c+3-4\kappa}{4}[-\eta(e_i)^2-\eta(e_j)^2]\\
&+\frac{c-1}{4}[3g(e_i,\varphi e_j)^2]+\frac{1}{2}[g(e_i,\varphi h'e_j)^2-g(e_i, h'e_j)^2\notag\\
&+g(e_i, h'e_i)g(e_j, h'e_j)-g(e_i,\varphi h'e_i)g(e_j,\varphi h'e_j)]
+g(e_i, h'e_i)\notag\\
&+2\eta(e_i)\eta(e_j)g(e_i, h'e_j)-g(e_i, h'e_i)\eta(e_j)^2-g(e_j, h'e_j)\eta(e_i)^2+g(e_j, h'e_j)\notag\\
&+\mu[g(e_i, h'e_i)\eta(e_j)^2+g(e_j, h'e_j)\eta(e_i)^2-2\eta(e_i)\eta(e_j)g(e_i, h'e_j)]\notag\\
&-\lambda_1\alpha(e_j,e_j)-\lambda_2\alpha(e_i,e_i)-\lambda_2(\lambda_1-\lambda_2)\beta(e_i,e_i)\notag\\
&-g(h(e_i,e_j),h(e_i,e_j))+g(h(e_i,e_i),h(e_j,e_j))-(\lambda_1-\lambda_2)g(h(e_j,e_j),P).\notag
\end{align}
Then
\begin{align} \label{s}
\tau(x)&=\frac{1}{2}\sum_{1\leq i\neq j\leq n}{R}(e_i,e_j,e_j,e_i)\\
&=
\frac{(n-1)n}{2}\frac{c+3}{4}+\frac{3(c-1)}{8}||\widehat{P}||^2-\frac{c+3-4\kappa}{4}(n-1)||\xi^\top||^2\notag\\
&+\frac{1}{4}[||(\varphi h')^\top||^2-||(h')^\top||^2+|{\rm trace}((h')^\top)|^2-|{\rm trace}((\varphi h')^\top)|^2]\notag\\
&+(n-1){\rm tr}((h')^\top)+(1-\mu)[g(\xi^\top,h'\xi^\top)-{\rm tr}((h')^\top)||\xi^\top||^2]\notag\\
&-\frac{\lambda_1+\lambda_2}{2}(n-1)\lambda-\frac{\lambda_2}{2}(\lambda_1-\lambda_2)(n-1)\mu\notag\\
&~~-\frac{\lambda_1-\lambda_2}{2}(n-1)n\pi(H)+\sum_{r=n+1}^{n+p}\sum_{1\leq i<j\leq n}[h^r_{ii}h^r_{jj}-(h^r_{ij})^2].\notag
\end{align}
Let $\Pi={\rm span}\{ e_1,e_2\}$ and by (3.2), we have
\begin{align} \label{C3}
&{R}(e_1,e_2,e_2,e_1)=\frac{c+3}{4}-\frac{c+3-4\kappa}{4}\gamma(\Pi)\\
&+\frac{3(c-1)}{4}\beta(\Pi)
+\frac{1}{2}[{\rm det}(h'|_{\Pi})-{\rm det}((\varphi h')|_{\Pi})]\notag\\
&+{\rm tr}(h'|_{\Pi})+(\mu-1)\theta(\Pi)-\lambda_1\alpha(e_2,e_2)\notag\\
&-\lambda_2\alpha(e_1,e_1)
-\lambda_2(\lambda_1-\lambda_2)\beta(e_1,e_1)\notag\\
&-(\lambda_1-\lambda_2)g(h(e_2,e_2),P)+\sum_{r=n+1}^{n+p}[h^r_{11}h^r_{22}-(h^r_{12})^2].\notag
\end{align}
Similarly, we have
\begin{align} \label{C4}
{R}(e_1,e_2,e_1,e_2)&=-\{\frac{c+3}{4}-\frac{c+3-4\kappa}{4}\gamma(\Pi)+\frac{3(c-1)}{4}\beta(\Pi)\\
&+\frac{1}{2}[{\rm det}(h'|_{\Pi})-{\rm det}((\varphi h')|_{\Pi})]+{\rm tr}(h'|_{\Pi})+(\mu-1)\theta(\Pi)\}\notag\\
&+\lambda_1\alpha(e_1,e_1)+\lambda_2\alpha(e_2,e_2)+\lambda_2(\lambda_1-\lambda_2)\beta(e_2,e_2)\notag\\
&+(\lambda_1-\lambda_2)g(h(e_1,e_1),P)-\sum_{r=n+1}^{n+p}[h^r_{11}h^r_{22}-(h^r_{12})^2].\notag
\end{align}
By (2.4),(3.4) and (3.5), we have
\begin{align} \label{C5}
K(\Pi)&=\frac{c+3}{4}-\frac{c+3-4\kappa}{4}\gamma(\Pi)+\frac{3(c-1)}{4}\beta(\Pi)\\
&+\frac{1}{2}[{\rm det}(h'|_{\Pi})-{\rm det}((\varphi h')|_{\Pi})]+{\rm tr}(h'|_{\Pi})+(\mu-1)\theta(\Pi)\notag\\
&-\frac{\lambda_1+\lambda_2}{2}{\rm tr}(\alpha|_\Pi)
-\frac{\lambda_2(\lambda_1-\lambda_2)}{2}{\rm tr}(\beta|_\Pi)\notag\\
&-\frac{\lambda_1-\lambda_2}{2}g({\rm tr}(h|_\Pi),P)+\sum_{r=n+1}^{n+p}[h^r_{11}h^r_{22}-(h^r_{12})^2].\notag
\end{align}
By Lemma 2.4 in \cite{ZZS}, we get
\begin{equation} \label{C4}
\sum_{r=n+1}^{n+p}[\sum_{1\leq i<j\leq n}h^r_{ii}h^r_{jj}-h^r_{11}h^r_{22}-\sum_{1\leq i<j\leq n}(h^r_{ij})^2+(h^r_{12})^2]\leq \frac{n^2(n-2)}{2(n-1)}\|H\|^2.
\end{equation}
By (3.3),(3.6) and (3.7) we complete the proof.

\end{proof}

\indent Similar to Corollaries 3.4 in \cite{ZZS}, we have\\

\begin{cor}  If $P$ is a tangent vector field on $M$, then $h=\widehat{h}$. In this case, the equality case of Theorem 3.1 holds at a point $x\in M$ if and only if,
with respect to a suitable orthonormal basis $\{e_A\}$ at $x$, the shape operators $A_r=A_{e_r}$ take the following forms:
$$A_{n+1}=\left(
  \begin{array}{ccccc}
    h_{11}^{n+1}&  0 & 0 & \cdots & 0 \\
   0&   h_{22}^{n+1} & 0 & \cdots & 0 \\
    0 & 0 &  h_{11}^{n+1}+h_{22}^{n+1} & \cdots & 0 \\
    \vdots & \vdots & \vdots & \ddots & 0 \\
    0&0 & 0 & \cdots & h_{11}^{n+1}+h_{22}^{n+1} \\
  \end{array}
\right)
$$
and
$$A_{r}=\left(
  \begin{array}{ccccc}
    h_{11}^{r}&  h_{12}^{r} & 0 & \cdots & 0 \\
    h_{12}^{r}&  -h_{11}^{r} & 0 & \cdots & 0 \\
    0 & 0 & 0 & \cdots & 0 \\
    \vdots & \vdots & \vdots & \ddots & 0 \\
    0&0 & 0 & \cdots & 0\\
  \end{array}
\right),r=n+2,\cdots,n+p.
$$
\end{cor}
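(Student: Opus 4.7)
The statement has two parts, and the first is immediate from the preliminaries: the identity
\[
h(X,Y)=\widehat{h}(X,Y)-\lambda_2 g(X,Y)P^\bot
\]
recorded in Section 2 shows that when $P$ is tangent to $M$ we have $P^\bot=0$, so $h(X,Y)=\widehat h(X,Y)$ for all $X,Y\in\Gamma(TM)$. In components, $h^r_{ij}=\widehat h^r_{ij}$ for every $r\in\{n+1,\ldots,n+p\}$ and $i,j\in\{1,\ldots,n\}$, hence the shape operators associated to $\nabla$ and to $\widehat\nabla$ coincide and we may simply speak of the shape operators $A_r$.

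For the equality statement, the plan is to track exactly where slack enters the proof of Theorem 3.1. Inspecting that argument, formulas (3.3), (3.6) and the Gauss equation combine as identities; the only inequality invoked is the algebraic estimate of Lemma 2.4 in \cite{ZZS},
\[
\sum_{r=n+1}^{n+p}\Bigl[\sum_{1\leq i<j\leq n}h^r_{ii}h^r_{jj}-h^r_{11}h^r_{22}-\sum_{1\leq i<j\leq n}(h^r_{ij})^2+(h^r_{12})^2\Bigr]\leq \frac{n^2(n-2)}{2(n-1)}\|H\|^2.
\]
Consequently, equality in Theorem 3.1 at $x$ is equivalent to equality in this single algebraic inequality, applied to the components $h^r_{ij}$. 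Since by the first part $h=\widehat h$, these components are the classical ones, and the characterization of equality in Chen's lemma is a purely linear-algebraic fact about symmetric matrices that does not depend on the chosen connection.

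I would then appeal directly to the equality discussion in \cite{ZZS} (which is Chen's original argument): choose an orthonormal frame $\{e_1,\ldots,e_n\}$ diagonalizing $A_{n+1}$ so that the Cauchy--Schwarz step attaining equality forces $h^{n+1}_{ij}=0$ for $i\neq j$ outside $\{1,2\}$ and $h^{n+1}_{33}=\cdots=h^{n+1}_{nn}=h^{n+1}_{11}+h^{n+1}_{22}$; for $r\geq n+2$, equality forces $h^r_{ij}=0$ whenever $\{i,j\}\not\subseteq\{1,2\}$ together with $h^r_{11}+h^r_{22}=0$. Reading off these component conditions produces exactly the two matrix forms in the statement.

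The main (and essentially only) obstacle is verifying that no other inequality is hidden in the derivation of Theorem 3.1; once this bookkeeping is confirmed, the equality case reduces transparently to the one in \cite{ZZS}, and the proof of the corollary is complete.
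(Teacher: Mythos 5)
Your proposal is correct and follows essentially the same route as the paper, which offers no independent argument for this corollary beyond the remark that it is ``Similar to Corollary 3.4 in \cite{ZZS}'': the tangency of $P$ kills the $-\lambda_2 g(X,Y)P^\bot$ term so $h=\widehat h$, and the only inequality in the proof of Theorem 3.1 is (3.7), whose equality case is the standard Chen algebraic characterization yielding the stated shape-operator forms. Your bookkeeping that (3.3), (3.6) and the Gauss equation enter as identities is exactly the verification the paper leaves implicit, so nothing further is needed.
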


\begin{thm}  Let $M^n$, $n\geq 3$, be an $n$-dimensional submanifold of a $(2m+1)$-dimensional $(\kappa,\mu)$-contact space form of constant $\varphi$-sectional curvature $N(c)$ endowed with a connection
$\overline{\nabla}$, then:

\indent {\rm (i)} For each unit vector $X$ in $T_xM$ we have
\begin{align} \label{C7}
{\rm Ric}(X)&\leq \frac{(n-1)(c+3)}{4}+\frac{3(c-1)}{4}||\widehat{P}X||^2-\frac{c+3-4\kappa}{4}[(n-2)\eta(X)^2+||\xi^\top||^2]\\
&+\frac{1}{2}[||(\varphi h'X)^\top||^2-||(h'X)^\top||^2+g(X,h'X){\rm tr}((h')^\top)-g(X,\varphi h'X){\rm tr}((\varphi h')^\top)]\notag\\
&+(n-2-||\xi^\top||^2+\mu ||\xi^\top||^2)g(X,hX)+(1-\eta(X)^2+\mu\eta(X)^2){\rm tr}((h')^\top)\notag\\
&+(2-2\mu)\eta(X)g(X,h'(\xi^\top))-\lambda_1\lambda+[\lambda_1-\lambda_2(n-1)]\alpha(X,X)\notag\\
&-\lambda_2(\lambda_1-\lambda_2)(n-1)\beta(X,X)
-n(\lambda_1-\lambda_2)\pi (H)+(\lambda_1-\lambda_2)\pi(h(X,X))
+\frac{n^2}{4}\|H\|^2.\notag
\end{align}
\indent {\rm (ii)} If $H(x)=0$, then a unit tangent vector $X$ at $x$ satisfies the equality case of \eqref{C7} if and only if $X\in \mathcal{N}(x)=\{X\in T_xM|h(X,Y)=0,\forall Y\in T_xM\}.$\\
\indent {\rm (iii)} The equality of \eqref{C7} holds for all unit tangent vector at $x$ if and only if \\
\indent $~~ h^r_{ij}=0,~~ i,j=1,2,\cdots,n, r=n+1,\cdots,n+p.$\\
\end{thm}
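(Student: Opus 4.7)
The plan is to mirror the argument of Theorem 3.1, but to contract the curvature along a single fixed direction rather than a $2$-plane, in the style of Chen's classical Ricci-curvature inequality. Fix $x\in M$ and a unit vector $X\in T_xM$, and extend it to an orthonormal tangent basis $\{e_1=X,e_2,\dots,e_n\}$ together with an orthonormal normal frame $\{e_{n+1},\dots,e_{n+p}\}$. First I would take the pointwise expansion of $R(e_i,e_j,e_j,e_i)$ already assembled in the proof of Theorem 3.1 (the combination of (2.3) with the explicit $N(c)$-curvature (3.1)), specialise to $i=1$, and sum over $j=2,\dots,n$ to get $\operatorname{Ric}(X)=\sum_{j\ge 2}R(e_1,e_j,e_j,e_1)$. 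The $(\kappa,\mu)$-contact curvature contracts cleanly because $\{e_j\}_{j\ge 2}$ is an orthonormal basis of the orthogonal complement of $X$ in $T_xM$: identities such as $\sum_{j\ge 2}\eta(e_j)^2=\|\xi^\top\|^2-\eta(X)^2$, $\sum_{j\ge 2}g(e_1,\varphi e_j)^2=\|\widehat P X\|^2$, and the analogous contractions against $h'$ and $\varphi h'$ are precisely what produce the coefficients $(n-2)\eta(X)^2$, $\|(\varphi h'X)^\top\|^2$, $g(X,h'X)\operatorname{tr}((h')^\top)$ and the mixed $\eta(X)g(X,h'\xi^\top)$ term appearing in the claimed bound. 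The non-metric correction terms from (2.3) similarly split into a full-trace piece $-\lambda_1\lambda$ plus a residue $[\lambda_1-\lambda_2(n-1)]\alpha(X,X)$, and analogously for the $\beta$-, $\pi(H)$- and $\pi(h(X,X))$-contributions.

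Next comes the extrinsic side coming from the Gauss equation: for each normal index $r\in\{n+1,\dots,n+p\}$,
\begin{equation*}
\sum_{j\ge 2}\bigl[h^r_{11}h^r_{jj}-(h^r_{1j})^2\bigr]=h^r_{11}\bigl(nH^r-h^r_{11}\bigr)-\sum_{j\ge 2}(h^r_{1j})^2.
\end{equation*}
I discard the non-positive $-(h^r_{1j})^2$ terms and apply the elementary estimate $a(S-a)\le S^2/4$ with $a=h^r_{11}$, $S=nH^r$ to obtain $h^r_{11}(nH^r-h^r_{11})\le n^2(H^r)^2/4$. Summing over $r$ produces the $\frac{n^2}{4}\|H\|^2$ term on the right-hand side of (i), completing part (i).

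For parts (ii) and (iii), I would trace equality through the two inequalities just used. Equality in $a(S-a)\le S^2/4$ forces $2h^r_{11}=nH^r$, while equality in the discarded term forces $h^r_{1j}=0$ for all $j\ge 2$ and all $r$. Combined with the hypothesis $H(x)=0$ in (ii), the first condition gives $h^r_{11}=0$ as well, so $h(X,Y)=0$ for every $Y\in T_xM$, which is the claim. For (iii), demanding equality for every unit $X$ and letting $X$ range over an orthonormal basis forces $h^r_{ij}=0$ for all $i,j,r$.

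The main obstacle will be the very first step: the explicit curvature (3.1) of $N(c)$ contains roughly a dozen terms pairing a fixed index with a running one, and the non-metric correction (2.3) brings in further $\alpha$-, $\beta$-, $\pi$- and $\pi(h)$-contributions. Carefully separating the diagonal $j=1$ piece (which must be subtracted before summing over $j\ge 2$) from the genuine trace is where every coefficient on the right-hand side of (i) gets fixed, and it is the step where errors are easiest to make; once the bookkeeping is correct the extrinsic estimate and the equality analysis are routine.
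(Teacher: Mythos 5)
Your proposal is correct and follows essentially the same route as the paper: contract the Gauss equation and the $N(c)$-curvature along $e_1=X$ to get the exact expression for ${\rm Ric}(X)$, then bound $\sum_{j\ge 2}h^r_{11}h^r_{jj}$ by $\tfrac{n^2}{4}\|H\|^2$. The only difference is that you prove the algebraic estimate (the paper's citation of Lemma 2.5 in \cite{ZZS}) inline via $a(S-a)\le S^2/4$ and spell out the equality analysis that the paper delegates to Theorem 5.2 of \cite{ZZS}; both are fine.
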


\begin{proof}
Let $X\in T_xM$ be a unit tangent vector at $x$. We choose the orthonormal basis $\{e_1,\cdots, e_n\}$ such that $e_1=X$. Then by (3.2), we have
\begin{align} \label{C8}
{\rm Ric}(X)&=\sum_{j=2}^n{R}(e_1,e_j,e_j,e_1)=\frac{(n-1)(c+3)}{4}+\frac{3(c-1)}{4}||\widehat{P}X||^2\\
&-\frac{c+3-4\kappa}{4}[(n-2)\eta(X)^2+||\xi^\top||^2]+\frac{1}{2}[||(\varphi h'X)^\top||^2\notag\\
&-||(h'X)^\top||^2+g(X,h'X){\rm tr}((h')^\top)-g(X,\varphi h'X){\rm tr}((\varphi h')^\top)]\notag\\
&+(n-2-||\xi^\top||^2+\mu ||\xi^\top||^2)g(X,hX)+(1-\eta(X)^2+\mu\eta(X)^2){\rm tr}((h')^\top)\notag\\
&+(2-2\mu)\eta(X)g(X,h'(\xi^\top))-\lambda_1\lambda+[\lambda_1-\lambda_2(n-1)]\alpha(X,X)\notag\\
&-\lambda_2(\lambda_1-\lambda_2)(n-1)\beta(X,X)
-n(\lambda_1-\lambda_2)\pi (H)+(\lambda_1-\lambda_2)\pi(h(X,X))\notag\\
&+\sum_{r=n+1}^{n+p}\sum_{j=2}^n[h^r_{11}h^r_{jj}-(h_{1j}^r)^2].\notag
\end{align}
By Lemma 2.5 in \cite{ZZS}, we get
\begin{equation*}
\sum_{r=n+1}^{n+p}\sum_{j=2}^nh^r_{11}h^r_{jj}\leq \frac{n^2}{4}\|H\|^2,
\end{equation*}
which together with \eqref{C8} gives \eqref{C7}. The Proofs of (ii) and (iii) are same as the proofs of (ii) and (iii) in Theorem 5.2 in \cite{ZZS}.
\end{proof}

Let $L$ be a $k$-plane section of $T_xM$, $x\in M$ and $X$ a unit vector in $L$. We choose an orthonormal frame $e_1,\cdots,e_k$ of $L$ such that $e_1=X$. In \cite{BYC1}, Chen defined the $k$-Ricci curvature
of $L$ at $X$ by
$${\rm Ric}_L(X)=K_{12}+K_{13}+\cdots+K_{1k}.$$
For an integer $k$, $2\leq k\leq n$, the Riemannian invariant $\Theta_k$ on $M$ defined by
$$\Theta_k(x)=\frac{1}{k-1}{\rm inf}\{{\rm Ric}_L(X)|L,X\},~~x\in M,$$
where $L$ runs over all $k$-plane sections in $T_xM$ and $X$ runs over all unit vectors in $L$. We have
\begin{equation} \label{C9}
\tau(x)\geq \frac{n(n-1)}{2}\Theta_k(x).
\end{equation}

\begin{thm}  Let $M^n$, $n\geq 3$, be an $n$-dimensional submanifold of a $(2m+1)$-dimensional $(\kappa,\mu)$-contact space form of constant $\varphi$-sectional curvature $N(c)$ endowed with a connection
$\overline{\nabla}$, then: for any integer $k$, $2\leq k\leq n$, and any point $x\in M$, we have
\begin{align}
n(n-1)\|H\|^2(x)&\geq n(n-1)\Theta_k(x)-\frac{c+3}{4}{(n-1)n}-\frac{3(c-1)}{4}||\widehat{P}||^2+\frac{c+3-4\kappa}{2}(n-1)||\xi^\top||^2\\
&-\frac{1}{2}[||(\varphi h')^\top||^2-||(h')^\top||^2+|{\rm trace}((h')^\top)|^2-|{\rm trace}((\varphi h')^\top)|^2]\notag\\
&-2(n-1){\rm tr}((h')^\top)-2(1-\mu)[g(\xi^\top,h'\xi^\top)-{\rm tr}((h')^\top)||\xi^\top||^2]\notag\\
&+{\lambda}(n-1)(\lambda_1+\lambda_2)+\lambda_2(\lambda_1-\lambda_2)(n-1){\mu}+(n-1)n(\lambda_1-\lambda_2)\pi(H).\notag
\end{align}
\end{thm}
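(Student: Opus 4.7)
The plan is to follow the standard Chen strategy for $k$-Ricci inequalities, combining the scalar curvature identity already established in (3.3) with the elementary Cauchy--Schwarz bound $\|h\|^2 \geq n\|H\|^2$ and the definition of $\Theta_k$ recorded in (3.10).

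First, I would rewrite (3.3) schematically as
\[
\tau(x) = A(x) - \frac{\lambda_1-\lambda_2}{2}(n-1)n\,\pi(H) + \sum_{r=n+1}^{n+p}\sum_{1\leq i<j\leq n}\bigl[h^r_{ii}h^r_{jj}-(h^r_{ij})^2\bigr],
\]
where $A(x)$ collects all the ambient-curvature and connection-dependent terms that do not involve $h$ (that is, the $(c+3)/4$ term, the $\widehat{P}$, $\xi^\top$, $(h')^\top$, $(\varphi h')^\top$ terms, the $\lambda$ and $\mu$ terms, etc.). This $A(x)$ is exactly what will, after multiplication by $-2$, produce the right-hand side of the target inequality (minus the $\Theta_k$ and $\pi(H)$ contributions).

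Next I would apply the elementary identity
\[
2\sum_r\sum_{i<j}h^r_{ii}h^r_{jj} = \sum_r\Bigl(\sum_i h^r_{ii}\Bigr)^2 - \sum_r\sum_i(h^r_{ii})^2 = n^2\|H\|^2 - \sum_r\sum_i(h^r_{ii})^2,
\]
together with $\|h\|^2 = \sum_r\sum_i(h^r_{ii})^2 + 2\sum_r\sum_{i<j}(h^r_{ij})^2$, to get the key identity
\[
\sum_r\sum_{i<j}\bigl[h^r_{ii}h^r_{jj}-(h^r_{ij})^2\bigr] = \tfrac{n^2}{2}\|H\|^2 - \tfrac{1}{2}\|h\|^2.
\]
The Cauchy--Schwarz inequality $\bigl(\sum_i h^r_{ii}\bigr)^2 \leq n\sum_i(h^r_{ii})^2 \leq n\sum_{i,j}(h^r_{ij})^2$, summed in $r$, gives $\|h\|^2\geq n\|H\|^2$; plugged into the identity above this yields
\[
\sum_r\sum_{i<j}\bigl[h^r_{ii}h^r_{jj}-(h^r_{ij})^2\bigr] \;\leq\; \tfrac{n(n-1)}{2}\|H\|^2.
\]

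Finally, combining this upper bound with the expression for $\tau(x)$ and then with the $k$-Ricci bound $\tau(x)\geq \frac{n(n-1)}{2}\Theta_k(x)$ from (3.10) produces
\[
\tfrac{n(n-1)}{2}\Theta_k(x) \;\leq\; A(x) - \tfrac{\lambda_1-\lambda_2}{2}(n-1)n\,\pi(H) + \tfrac{n(n-1)}{2}\|H\|^2.
\]
Multiplying by $2$ and substituting the explicit form of $A(x)$ read off from (3.3) gives exactly the inequality in the theorem. The main obstacle is not conceptual but bookkeeping: one must verify that every ambient and connection-dependent term in $A(x)$ picks up the correct factor of $-2$ and matches the corresponding term in the statement (in particular the sign flips on $||\xi^\top||^2$, $(h')^\top$-traces, and the $\lambda$, $\mu$ contributions), and that the $(\lambda_1-\lambda_2)(n-1)n\pi(H)$ term is carried through with the right sign. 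No new geometric ingredient beyond the Cauchy--Schwarz bound and the definition of $\Theta_k$ is needed.
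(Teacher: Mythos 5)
Your proposal is correct and follows essentially the same route as the paper: the author likewise rewrites (3.3) as $2\tau - [\text{ambient and connection terms}] = n^2\|H\|^2 - \|h\|^2$ (his equation (3.13)), obtains $\|h\|^2 \geq n\|H\|^2$ (via a frame with $e_{n+1}$ parallel to $H$ and $A_{n+1}$ diagonalized, which is just your Cauchy--Schwarz step in disguise), and concludes with the bound $\tau \geq \frac{n(n-1)}{2}\Theta_k$ from (3.10). The only difference is cosmetic, so no further comment is needed.
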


\begin{proof}
 We choose the orthonormal frame $\{e_1,\cdots,e_n,e_{n+1},\cdots,e_{n+p}\}$ at $x$ such that the $e_{n+1}$ is in the direction of the mean curvature vector $H(x)$ and
$\{e_1,\cdots,e_n\}$ diagonalize the shape operator $A_{n+1}$. Then the shape operators take the following form
\begin{equation} \label{C10}
A_{n+1}={\rm diag}(a_1,a_2,\cdots,a_n), ~~{\rm trace}A_r=0,~~r=n+2,\cdots,n+p.
\end{equation}
By (3.3), we have
\begin{align}
&2\tau-\frac{c+3}{4}{(n-1)n}-\frac{3(c-1)}{4}||\widehat{P}||^2+\frac{c+3-4\kappa}{2}(n-1)||\xi^\top||^2\\
&-\frac{1}{2}[||(\varphi h')^\top||^2-||(h')^\top||^2+|{\rm trace}((h')^\top)|^2-|{\rm trace}((\varphi h')^\top)|^2]\notag\\
&-2(n-1){\rm tr}((h')^\top)-2(1-\mu)[g(\xi^\top,h'\xi^\top)-{\rm tr}((h')^\top)||\xi^\top||^2]\notag\\
&+{\lambda}(n-1)(\lambda_1+\lambda_2)+\lambda_2(\lambda_1-\lambda_2)(n-1){\mu}+(n-1)n(\lambda_1-\lambda_2)\pi(H)\notag\\
&=n^2||H||^2-||h||^2.\notag
\end{align}
By \eqref{C10}, we get
$$\|h\|^2=\sum_{j=1}^{n}a_j^2+\sum_{r=n+2}^{n+p}\sum_{1\leq i,j\leq l}(h^r_{ij})^2.$$
Since $\sum_{j=1}^{n}a_j^2\geq n\|H\|^2$,  by (3.10) and (3.13), we completes the proof.
\end{proof}

Nextly, we prove Casorati inequalities. We denote the Casorati curvature with respect to $\overline{\nabla}$ by
$$\mathcal{C}=\frac{1}{n}\sum_{r=n+1}^{n+p}\sum_{i,j=1}^n(h_{ij}^r)^2.$$
Suppose now that $L$ is an $l$-dimensional subspace of $T_xM$, $l\geq 2$, and $\{e_1,\cdots,e_l\}$ be an orthonormal basis of $L$. Then the Casorati curvature of the $l$-plane section $L$ with
respect to $\overline{\nabla}$ is given by
$$\mathcal{C}(L)=\frac{1}{l}\sum_{r=n+1}^{n+p}\sum_{i,j=1}^l(h_{ij}^r)^2.$$
Then normalized $\delta$-Casorati curvature $\delta_c(n-1)$ and $\hat{\delta}_c(n-1)$ with respect to $\overline{\nabla}$ are given by
$$[\delta_c(n-1)]_x=\frac{1}{2}\mathcal{C}_x+\frac{n+1}{2n}{\rm inf}\{\mathcal{C}(L)|L~{\rm~a~~hyperplane~~ of~~} T_xM\},$$
$$[\hat{\delta}_c(n-1)]_x={2}\mathcal{C}_x-\frac{2n-1}{2n}{\rm sup}\{\mathcal{C}(L)|L~{\rm~a~~hyperplane~~ of~~} T_xM\}.$$

\begin{thm}  Let $M^n$, $n\geq 3$, be an $n$-dimensional submanifold of a $(2m+1)$-dimensional $(\kappa,\mu)$-contact space form of constant $\varphi$-sectional curvature $N(c)$ endowed with a connection
$\overline{\nabla}$, then:

\indent {\rm (i)} The normalized $\delta$-Casorati curvature $\delta_c(n-1)$ satisfies
\begin{align} 
2\tau\leq& n(n-1)\delta_c(n-1)+\frac{c+3}{4}{(n-1)n}+\frac{3(c-1)}{4}||\widehat{P}||^2-\frac{c+3-4\kappa}{2}(n-1)||\xi^\top||^2\\
&+\frac{1}{2}[||(\varphi h')^\top||^2-||(h')^\top||^2+|{\rm trace}((h')^\top)|^2-|{\rm trace}((\varphi h')^\top)|^2]\notag\\
&+2(n-1){\rm tr}((h')^\top)+2(1-\mu)[g(\xi^\top,h'\xi^\top)-{\rm tr}((h')^\top)||\xi^\top||^2]\notag\\
&-{\lambda}(n-1)(\lambda_1+\lambda_2)-\lambda_2(\lambda_1-\lambda_2)(n-1){\mu}-(n-1)n(\lambda_1-\lambda_2)\pi(H).
\notag
\end{align}
Moreover, if $P$ is tangent to $M$, the equality case of (3.14) holds if and only if $M$ is an invariantly quasi-umbilical submanifold in $N$, such that with respect to suitable orthonormal tangent
frame $\{e_1,\cdots,e_n\}$ and normal orthonormal frame $\{e_{n+1},\cdots,e_{n+p}\}$, the shape operator $A_r=A_{e_r}$, $r\in\{n+1,\cdots,n+p\}$,taking the following forms:
$$A_{n+1}={\rm diag}(a,a,\cdots,a,2a),~~A_{n+2}=\cdots=A_{n+p}=0.$$
\indent {\rm (ii)} The normalized $\delta$-Casorati curvature $\hat{\delta}_c(n-1)$ satisfies
\begin{align}
2\tau\leq& n(n-1)\hat{\delta}_c(n-1)+\frac{c+3}{4}{(n-1)n}+\frac{3(c-1)}{4}||\widehat{P}||^2-\frac{c+3-4\kappa}{2}(n-1)||\xi^\top||^2\\
&+\frac{1}{2}[||(\varphi h')^\top||^2-||(h')^\top||^2+|{\rm trace}((h')^\top)|^2-|{\rm trace}((\varphi h')^\top)|^2]\notag\\
&+2(n-1){\rm tr}((h')^\top)+2(1-\mu)[g(\xi^\top,h'\xi^\top)-{\rm tr}((h')^\top)||\xi^\top||^2]\notag\\
&-{\lambda}(n-1)(\lambda_1+\lambda_2)-\lambda_2(\lambda_1-\lambda_2)(n-1){\mu}-(n-1)n(\lambda_1-\lambda_2)\pi(H).
\notag
\end{align}
Moreover, if $P$ is tangent to $M$, the equality case of (3.14) holds if and only if $M$ is an invariantly quasi-umbilical submanifold in $N$, such that with respect to suitable orthonormal tangent
frame $\{e_1,\cdots,e_n\}$ and normal orthonormal frame $\{e_{n+1},\cdots,e_{n+p}\}$, the shape operator $A_r=A_{e_r}$, $r\in\{n+1,\cdots,n+p\}$,taking the following forms:
$$A_{n+1}={\rm diag}(2a,2a,\cdots,2a,a),~~A_{n+2}=\cdots=A_{n+p}=0.$$
\end{thm}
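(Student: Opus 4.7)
The plan is to reduce Theorem 3.5 to a purely algebraic Casorati-type inequality on the second fundamental form $h$ of $\overline{\nabla}$, and then apply the Decu--Haesen--Verstraelen optimization technique. Equation (3.13), already derived in the proof of Theorem 3.4, may be read as
\begin{equation*}
n^{2}\|H\|^{2}-\|h\|^{2}\;=\;2\tau\;-\;\Lambda(c,\kappa,\mu,\lambda_{1},\lambda_{2}),
\end{equation*}
where $\Lambda$ is the full intrinsic/ambient sum appearing on the left-hand side of (3.13). Subtracting this identity from the inequalities claimed in (i) and (ii), both parts of the theorem become equivalent to the connection-free estimates
\begin{equation*}
n^{2}\|H\|^{2}-\|h\|^{2}\;\le\;n(n-1)\,\delta_{c}(n-1)\quad\text{and}\quad n^{2}\|H\|^{2}-\|h\|^{2}\;\le\;n(n-1)\,\hat{\delta}_{c}(n-1),
\end{equation*}
respectively; no $(\kappa,\mu)$-contact or connection data enters these.

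For (i), fix $x\in M$ and a hyperplane $L\subset T_{x}M$, and choose orthonormal tangent and normal frames with $L=\mathrm{span}\{e_{1},\dots,e_{n-1}\}$. I introduce the quadratic polynomial
\begin{equation*}
\mathcal{P}\;=\;\tfrac{1}{2}n(n-1)\,\mathcal{C}\;+\;\tfrac{1}{2}(n-1)(n+1)\,\mathcal{C}(L)\;-\;n^{2}\|H\|^{2}\;+\;\|h\|^{2}
\end{equation*}
in the components $h_{ij}^{r}$. Using $n\mathcal{C}=\|h\|^{2}$ and $(n-1)\mathcal{C}(L)=\sum_{r}\sum_{i,j\le n-1}(h_{ij}^{r})^{2}$, $\mathcal{P}$ splits as a sum over $r$ whose off-diagonal contribution is manifestly nonnegative; the diagonal contribution is
\begin{equation*}
Q_{r}\;=\;(n+1)\sum_{i=1}^{n-1}(h_{ii}^{r})^{2}\;+\;\tfrac{n-1}{2}(h_{nn}^{r})^{2}\;-\;\Bigl(\sum_{i=1}^{n}h_{ii}^{r}\Bigr)^{2}.
\end{equation*}
A Lagrange-multiplier (or complete-the-square) argument shows that $Q_{r}\ge 0$, with minimizing line $h_{11}^{r}=\cdots=h_{n-1,n-1}^{r}=a,\ h_{nn}^{r}=2a$. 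Hence $\mathcal{P}\ge 0$ for every hyperplane $L$, and taking the infimum over $L$ yields the required estimate $n^{2}\|H\|^{2}-\|h\|^{2}\le n(n-1)\delta_{c}(n-1)$.

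Part (ii) is proved by the same strategy applied to the modified polynomial
\begin{equation*}
\widehat{\mathcal{P}}\;=\;2n(n-1)\,\mathcal{C}\;-\;\tfrac{1}{2}(n-1)(2n-1)\,\mathcal{C}(L)\;-\;n^{2}\|H\|^{2}\;+\;\|h\|^{2},
\end{equation*}
whose diagonal reduction $\widehat{Q}_{r}$ has the dual shape with coefficients $(2n-1)/2$ in front of $\sum_{i<n}(h_{ii}^{r})^{2}$ and $(2n-1)$ in front of $(h_{nn}^{r})^{2}$, and minimizing line $h_{11}^{r}=\cdots=h_{n-1,n-1}^{r}=2a,\ h_{nn}^{r}=a$; requiring $\widehat{\mathcal{P}}\ge 0$ for \emph{all} $L$ is exactly the statement $n^{2}\|H\|^{2}-\|h\|^{2}\le n(n-1)\hat{\delta}_{c}(n-1)$. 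For the equality discussion, when $P\in\Gamma(TM)$ the formula $h(X,Y)=\widehat{h}(X,Y)-\lambda_{2}g(X,Y)P^{\bot}$ reduces to $h=\widehat{h}$, so the $h_{ij}^{r}$ agree with the classical second-fundamental-form components. Equality in $\mathcal{P}\ge 0$ forces the off-diagonals of each $A_{r}$ to vanish and the diagonals of $A_{r}$ to lie on the minimizing line of $Q_{r}$; aligning $e_{n+1}$ with $H$ so that $\mathrm{trace}\,A_{r}=0$ for $r\ge n+2$ then forces $A_{n+2}=\cdots=A_{n+p}=0$ and pins down $A_{n+1}$ in the claimed diagonal form, and analogously for (ii).

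The main technical obstacle is establishing positive semidefiniteness of the diagonal quadratic forms $Q_{r}$ and $\widehat{Q}_{r}$ and correctly identifying the unique minimizing direction, which requires a careful Hessian or sum-of-squares analysis even though the general structure is standard; one must also verify that the infimum (resp.\ supremum) over hyperplanes $L$ commutes with the pointwise inequality $\mathcal{P}\ge 0$ (resp.\ $\widehat{\mathcal{P}}\ge 0$), and keep track of the sign conventions introduced by the new left-hand-side terms of (3.13) so that the $\Lambda$ that cancels there matches the $\Lambda$ on the right-hand side of the claimed inequalities.
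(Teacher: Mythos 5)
Your overall route is the same as the paper's: the paper also rewrites (3.3) as $2\tau=\Lambda+n^2\|H\|^2-n\mathcal{C}$, forms exactly your polynomial $\mathcal{P}$ (its quantity $Q$ in (3.17), using $\|h\|^2=n\mathcal{C}$), and then simply cites Lemma 4 and Theorem 3 of \cite{ZZ} for $Q\geq 0$ and for the equality analysis. You supply the proof of that lemma inline via a weighted Cauchy--Schwarz/sum-of-squares argument, which is a reasonable self-contained substitute, and your treatment of part (ii), of the reduction to a connection-free statement, and of the infimum/supremum over hyperplanes are all consistent with what the paper does.

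There is, however, one concrete error in the diagonal reduction for part (i). From $\mathcal{P}=\tfrac12 n(n-1)\mathcal{C}+\tfrac12(n-1)(n+1)\mathcal{C}(L)-n^2\|H\|^2+\|h\|^2$, the coefficient of $(h_{nn}^r)^2$ is $\tfrac{n-1}{2}$ from the first term plus $1$ from $\|h\|^2$ (the $\mathcal{C}(L)$ term contributes nothing since $e_n\notin L$), i.e.\ $\tfrac{n+1}{2}$, not the $\tfrac{n-1}{2}$ you wrote. This matters: the form $(n+1)\sum_{i<n}x_i^2+\tfrac{n-1}{2}x_n^2-\bigl(\sum_i x_i\bigr)^2$ is \emph{not} positive semidefinite (for $n=3$, take $x_1=x_2=\varepsilon$, $x_3=1$ to get $4\varepsilon^2-4\varepsilon<0$), so your claimed $Q_r\geq 0$ fails as stated. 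With the correct coefficient $\tfrac{n+1}{2}$ the weighted Cauchy--Schwarz closes immediately, since $\tfrac{n-1}{n+1}+\tfrac{2}{n+1}=1$, and equality holds exactly on the line $x_1=\cdots=x_{n-1}=a$, $x_n=2a$, which is the minimizing direction you asserted; your coefficients $\tfrac{2n-1}{2}$ and $2n-1$ in $\widehat{Q}_r$ for part (ii) are correct ($\tfrac{2(n-1)}{2n-1}+\tfrac{1}{2n-1}=1$, with equality line $(2a,\dots,2a,a)$). So the gap is an arithmetic slip rather than a conceptual one, but as written the key inequality $Q_r\geq0$ would be false and must be repaired before the argument is complete.
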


\begin{proof} By (3.3), we have
\begin{align}
2\tau=&\frac{c+3}{4}{(n-1)n}+\frac{3(c-1)}{4}||\widehat{P}||^2-\frac{c+3-4\kappa}{2}(n-1)||\xi^\top||^2\\
&+\frac{1}{2}[||(\varphi h')^\top||^2-||(h')^\top||^2+|{\rm trace}((h')^\top)|^2-|{\rm trace}((\varphi h')^\top)|^2]\notag\\
&+2(n-1){\rm tr}((h')^\top)+2(1-\mu)[g(\xi^\top,h'\xi^\top)-{\rm tr}((h')^\top)||\xi^\top||^2]\notag\\
&-{\lambda}(n-1)(\lambda_1+\lambda_2)-\lambda_2(\lambda_1-\lambda_2)(n-1){\mu}\notag\\
&-(n-1)n(\lambda_1-\lambda_2)\pi(H)+n^2||H||^2-n\mathcal{C}.\notag
\notag
\end{align}

Let 
\begin{align}
Q=&\frac{1}{2}n(n-1)\mathcal{C}+\frac{(n-1)(n+1)}{2}\mathcal{C}(L)
-2\tau+\frac{c+3}{4}{(n-1)n}\\
&+\frac{3(c-1)}{4}||\widehat{P}||^2-\frac{c+3-4\kappa}{2}(n-1)||\xi^\top||^2\notag\\
&+\frac{1}{2}[||(\varphi h')^\top||^2-||(h')^\top||^2+|{\rm trace}((h')^\top)|^2-|{\rm trace}((\varphi h')^\top)|^2]\notag\\
&+2(n-1){\rm tr}((h')^\top)+2(1-\mu)[g(\xi^\top,h'\xi^\top)-{\rm tr}((h')^\top)||\xi^\top||^2]\notag\\
&-{\lambda}(n-1)(\lambda_1+\lambda_2)-\lambda_2(\lambda_1-\lambda_2)(n-1){\mu}\notag\\
&-(n-1)n(\lambda_1-\lambda_2)\pi(H).\notag
\notag
\end{align}
By Lemma 4 in \cite{ZZ}, we have $Q\geq 0$, so we get (3.14). Similar to the proof of Theorem 3 in \cite{ZZ}, we can prove Theorem 3.5.
\end{proof}

\section{Chen inequalities for submanifolds in $(\kappa,\mu)$-contact space form with the second generalized semi-symmetric non-metric connection.}

Let $\widetilde{\tau}$ and $\widetilde{K}(\Pi)$ be the scalar curvature and the section curvature with respect to the connection ${\widetilde{\nabla}}$ respectively.
 Similar to Theorem 3.1, we have
\begin{thm}
 Let $M^n$, $n\geq 3$, be an $n$-dimensional submanifold of a $(2m+1)$-dimensional $(\kappa,\mu)$-contact space form of constant $\varphi$-sectional curvature $N(c)$ endowed with a connection $\overline{\widetilde{\nabla}}$
, then
\begin{align*}
\widetilde{\tau}(x)-\widetilde{K}(\Pi)&\leq \frac{(n+1)(n-2)}{2}\frac{c+3}{4}+\frac{c+3-4\kappa}{4}[-(n-1)||\xi^\top||^2+\gamma(\Pi)]\\
&+\frac{c-1}{8}[3||\widehat{P}||^2-6\beta(\Pi)]
+\frac{1}{4}[||(\varphi h')^\top||^2-||(h')^\top||^2\\
&+|{\rm trace}((h')^\top)|^2-|{\rm trace}((\varphi h')^\top)|^2]
-\frac{1}{2}[{\rm det}(h'|_{\Pi})-{\rm det}((\varphi h')|_{\Pi})]\\
&+(n-1){\rm tr}((h')^\top)-{\rm tr}(h'|_{\Pi})+(1-\mu)[g(\xi^\top,h'\xi^\top)-{\rm tr}((h')^\top)||\xi^\top||^2\\
&+\theta(\Pi)]-\frac{n-1}{2}b\lambda'+\frac{n-1}{2}b^2||P^\top||^2\\
&-\frac{b}{2}(n-1)n\pi(\widetilde{H})+\frac{b}{2}{\rm tr}(\alpha'|_{\Pi})-\frac{b^2}{2}||P_{\Pi}||^2\\
&+\frac{b}{2}\pi({\rm tr}(\widetilde{h}|_{\Pi}))
+\frac{n^2(n-2)}{2(n-1)}\|\widetilde{H}\|^2.
\end{align*}
\end{thm}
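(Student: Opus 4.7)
The strategy is to mirror the proof of Theorem 3.1, replacing the curvature formula (2.2) and Gauss equation (2.3) for the first connection by their counterparts (2.6) and (2.8) for $\overline{\widetilde{\nabla}}$. Specifically, I would first combine (2.6), (2.8) and the expression (3.1) for the curvature $\widehat{\overline{R}}$ of the $(\kappa,\mu)$-contact space form to obtain a formula for $\widetilde{R}(e_i,e_j,e_j,e_i)$, analogous to (3.2). Setting $X=W=e_i$ and $Y=Z=e_j$ with $i\neq j$, the off-diagonal $g(Z,W)=g(e_i,e_j)=0$ and $g(Y,W)=0$ kills most of the $a$- and $b$-terms in (2.6), leaving only $-b\alpha'(e_j,e_j)+b^2\pi(e_j)^2$; similarly the Gauss equation (2.8) contributes $h_{ii}^r h_{jj}^r-(h_{ij}^r)^2-b\pi(\widetilde h(e_j,e_j))$ after setting $\widetilde h=\widehat h$.

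Next I would sum over $i\neq j$ and divide by two, as in (3.3). The new connection terms produce
\[
-\frac{n-1}{2}b\lambda'+\frac{n-1}{2}b^2\|P^\top\|^2-\frac{b}{2}(n-1)n\,\pi(\widetilde H),
\]
since $\sum_j\alpha'(e_j,e_j)=\lambda'$, $\sum_j\pi(e_j)^2=\|P^\top\|^2$, and $\sum_j\widetilde h(e_j,e_j)=n\widetilde H$. The $c,\kappa,\mu,h'$-terms are exactly as in (3.3) because $\widehat{\overline R}$ is unchanged.

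Then I would compute $\widetilde K(\Pi)$ via the symmetrization (2.4), in analogy with (3.4)--(3.6). For $\widetilde R(e_1,e_2,e_2,e_1)$ only $-b\alpha'(e_2,e_2)+b^2\pi(e_2)^2-b\pi(\widetilde h(e_2,e_2))$ survives from the new pieces; for $\widetilde R(e_1,e_2,e_1,e_2)$, by symmetry one gets $+b\alpha'(e_1,e_1)-b^2\pi(e_1)^2+b\pi(\widetilde h(e_1,e_1))$. Averaging with the sign convention of (2.4) yields
\[
-\frac{b}{2}\,\mathrm{tr}(\alpha'|_\Pi)+\frac{b^2}{2}\|P_\Pi\|^2-\frac{b}{2}\pi(\mathrm{tr}(\widetilde h|_\Pi))
\]
as the new contribution to $\widetilde K(\Pi)$; subtracting from $\widetilde\tau$ flips these signs, matching the statement.

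Finally, the second fundamental form terms $\sum_r[\sum_{i<j}h_{ii}^rh_{jj}^r-h_{11}^rh_{22}^r-\sum_{i<j}(h_{ij}^r)^2+(h_{12}^r)^2]$ are handled by Lemma 2.4 of \cite{ZZS} (applied to $\widetilde h=\widehat h$), giving the bound $\tfrac{n^2(n-2)}{2(n-1)}\|\widetilde H\|^2$; this closes the inequality. The main obstacle is purely bookkeeping: one must carefully track which of the $a,b,\alpha',\pi,P$ terms in (2.6) and (2.8) actually survive upon contraction, and match signs through the antisymmetrization defining $\widetilde K(\Pi)$. No new geometric idea beyond the Theorem~3.1 argument is needed.
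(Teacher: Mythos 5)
Your proposal is correct and follows essentially the same route as the paper: specialize the curvature relation for $\overline{\widetilde{\nabla}}$ (equation (2.7) in the paper's numbering, not (2.6)) and the Gauss equation (2.8) together with (3.1) to $X=W=e_i$, $Y=Z=e_j$, sum to get $\widetilde{\tau}$, antisymmetrize on a $2$-plane to get $\widetilde{K}(\Pi)$, and bound the remaining second-fundamental-form terms by Lemma 2.4 of \cite{ZZS}. The surviving connection terms and their signs all match the paper's (4.1)--(4.5).
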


\begin{proof}
Let $x\in M$ and $\{e_1,\cdots,e_n\}$ and $\{e_{n+1},\cdots,e_{n+p}\}$ be orthonormal basis of $T_xM$ and $T_x^\bot M$ respectively. For $X=W=e_i$, $Y=Z=e_j$, $i\neq j$
by (2.7),(2.8) and (3.1), we have
\begin{align}
\widetilde{R}(e_i,e_j,e_j,e_i)&=\frac{c+3}{4}+\frac{c+3-4\kappa}{4}[-\eta(e_i)^2-\eta(e_j)^2]\\
&+\frac{c-1}{4}[3g(e_i,\varphi e_j)^2]+\frac{1}{2}[g(e_i,\varphi h'e_j)^2-g(e_i, h'e_j)^2\notag\\
&+g(e_i, h'e_i)g(e_j, h'e_j)-g(e_i,\varphi h'e_i)g(e_j,\varphi h'e_j)]
+g(e_i, h'e_i)\notag\\
&+2\eta(e_i)\eta(e_j)g(e_i, h'e_j)-g(e_i, h'e_i)\eta(e_j)^2-g(e_j, h'e_j)\eta(e_i)^2+g(e_j, h'e_j)\notag\\
&+\mu[g(e_i, h'e_i)\eta(e_j)^2+g(e_j, h'e_j)\eta(e_i)^2-2\eta(e_i)\eta(e_j)g(e_i, h'e_j)]\notag\\
&-b\alpha'(e_j,e_j)+b^2\pi(e_j)^2-b\pi(\widetilde{h}(e_j,e_j))+\sum_{r=n+1}^{n+p}[\widetilde{h}^r_{ii}\widetilde{h}^r_{jj}-(\widetilde{h}^r_{ij})^2].\notag
\end{align}
Then
\begin{align} \label{s}
\widetilde{\tau}(x)&=
\frac{(n-1)n}{2}\frac{c+3}{4}+\frac{3(c-1)}{8}||\widehat{P}||^2-\frac{c+3-4\kappa}{4}(n-1)||\xi^\top||^2\\
&+\frac{1}{4}[||(\varphi h')^\top||^2-||(h')^\top||^2+|{\rm trace}((h')^\top)|^2-|{\rm trace}((\varphi h')^\top)|^2]\notag\\
&+(n-1){\rm tr}((h')^\top)+(1-\mu)[g(\xi^\top,h'\xi^\top)-{\rm tr}((h')^\top)||\xi^\top||^2]\notag\\
&-\frac{n-1}{2}b\lambda'+\frac{n-1}{2}b^2||P^\top||^2-\frac{b}{2}(n-1)n\pi(\widetilde{H})\notag\\
&+\sum_{r=n+1}^{n+p}\sum_{1\leq i<j\leq n}[\widetilde{h}^r_{ii}\widetilde{h}^r_{jj}-(\widetilde{h}^r_{ij})^2]
.\notag
\end{align}
Let $\Pi={\rm span}\{ e_1,e_2\}$ and by (4.1), we have
\begin{align} \label{C3}
&\widetilde{R}(e_1,e_2,e_2,e_1)=\frac{c+3}{4}-\frac{c+3-4\kappa}{4}\gamma(\Pi)\\
&+\frac{3(c-1)}{4}\beta(\Pi)
+\frac{1}{2}[{\rm det}(h'|_{\Pi})-{\rm det}((\varphi h')|_{\Pi})]\notag\\
&+{\rm tr}(h'|_{\Pi})+(\mu-1)\theta(\Pi)-b\alpha'(e_2,e_2)+b^2\pi(e_2)^2\notag\\
&-b\pi(\widetilde{h}(e_2,e_2))+\sum_{r=n+1}^{n+p}[\widetilde{h}^r_{11}\widetilde{h}^r_{22}-(\widetilde{h}^r_{12})^2]\notag
\end{align}
Similarly, we have
\begin{align} \label{C4}
\widetilde{R}(e_1,e_2,e_1,e_2)&=-\{\frac{c+3}{4}-\frac{c+3-4\kappa}{4}\gamma(\Pi)+\frac{3(c-1)}{4}\beta(\Pi)\\
&+\frac{1}{2}[{\rm det}(h'|_{\Pi})-{\rm det}((\varphi h')|_{\Pi})]+{\rm tr}(h'|_{\Pi})+(\mu-1)\theta(\Pi)\}\notag\\
&+b\alpha'(e_1,e_1)-b^2\pi(e_1)^2+b\pi(\widetilde{h}(e_1,e_1))\notag\\
&-\sum_{r=n+1}^{n+p}[\widetilde{h}^r_{11}\widetilde{h}^r_{22}-(\widetilde{h}^r_{12})^2]\notag
\end{align}
By (2.4),(4.3) and (4.4), we have
\begin{align} \label{C5}
\widetilde{K}(\Pi)&=\frac{c+3}{4}-\frac{c+3-4\kappa}{4}\gamma(\Pi)+\frac{3(c-1)}{4}\beta(\Pi)\\
&+\frac{1}{2}[{\rm det}(h'|_{\Pi})-{\rm det}((\varphi h')|_{\Pi})]+{\rm tr}(h'|_{\Pi})+(\mu-1)\theta(\Pi)\notag\\
&-\frac{b}{2}{\rm tr}(\alpha'|_{\Pi})+\frac{b^2}{2}||P_{\Pi}||^2
-\frac{b}{2}\pi({\rm tr}(\widetilde{h}|_{\Pi}))\notag\\
&+\sum_{r=n+1}^{n+p}[\widetilde{h}^r_{11}\widetilde{h}^r_{22}-(\widetilde{h}^r_{12})^2].\notag
\end{align}
By (4.2),(4.5) and (3.7) we complete the proof.
\end{proof}

Similarly to Theorem 3.3, we have

\begin{thm}  Let $M^n$, $n\geq 3$, be an $n$-dimensional submanifold of a $(2m+1)$-dimensional $(\kappa,\mu)$-contact space form of constant $\varphi$-sectional curvature $N(c)$ endowed with a connection
$\overline{\widetilde{\nabla}}$, then: for each unit vector $X$ in $T_xM$ we have
\begin{align} 
\widetilde{{\rm Ric}}(X)&\leq \frac{(n-1)(c+3)}{4}+\frac{3(c-1)}{4}||\widehat{P}X||^2-\frac{c+3-4\kappa}{4}[(n-2)\eta(X)^2+||\xi^\top||^2]\\
&+\frac{1}{2}[||(\varphi h'X)^\top||^2-||(h'X)^\top||^2+g(X,h'X){\rm tr}((h')^\top)-g(X,\varphi h'X){\rm tr}((\varphi h')^\top)]\notag\\
&+(n-2-||\xi^\top||^2+\mu ||\xi^\top||^2)g(X,hX)+(1-\eta(X)^2+\mu\eta(X)^2){\rm tr}((h')^\top)\notag\\
&+(2-2\mu)\eta(X)g(X,h'(\xi^\top))-b\lambda+b\alpha'(X,X)+b^2||P^\top||^2\notag\\
&-b^2\pi(X)^2-nb\pi(\widetilde{H})+b\pi(\widetilde{h}(X,X))+
\frac{n^2}{4}\|\widetilde{H}\|^2.\notag
\end{align}
\end{thm}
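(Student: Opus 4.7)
The plan is to mimic the proof of Theorem 3.3 almost verbatim, replacing the curvature identities tied to $\overline{\nabla}$ with those tied to $\overline{\widetilde{\nabla}}$, which were already assembled in the course of proving Theorem 4.1. Fix $x\in M$ and a unit tangent vector $X\in T_xM$, and pick an orthonormal frame $\{e_1,\ldots,e_n\}$ of $T_xM$ with $e_1=X$; complete it by a normal frame $\{e_{n+1},\ldots,e_{n+p}\}$.

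First I would recall the pointwise identity (4.1) that was derived when proving Theorem 4.1, which evaluates $\widetilde{R}(e_i,e_j,e_j,e_i)$ for $i\neq j$ in terms of the $(\kappa,\mu)$-space-form curvature data and the $b$-dependent correction terms $-b\alpha'(e_j,e_j)+b^2\pi(e_j)^2-b\pi(\widetilde{h}(e_j,e_j))$, together with the usual Gauss quadratic $\widetilde{h}^r_{ii}\widetilde{h}^r_{jj}-(\widetilde{h}^r_{ij})^2$. Setting $i=1$ and summing over $j=2,\ldots,n$ gives $\widetilde{\mathrm{Ric}}(X)=\sum_{j=2}^n \widetilde{R}(e_1,e_j,e_j,e_1)$. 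The non-quadratic terms repackage into exactly the first four lines of the stated bound (the $\widehat{P}$, $\xi^\top$, $(h')^\top$, $(\varphi h')^\top$, $\mu$, $\eta$ terms), using the definitions of $\|(\varphi h'X)^\top\|^2$, $\|(h'X)^\top\|^2$ and of $\xi^\top$ as in Section 3, and the $b$-correction terms collapse to $-b\lambda'+b\alpha'(X,X)+b^2\|P^\top\|^2-b^2\pi(X)^2-nb\pi(\widetilde{H})+b\pi(\widetilde{h}(X,X))$ because $\sum_{j=1}^n \alpha'(e_j,e_j)=\lambda'$, $\sum_{j=1}^n \pi(e_j)^2=\|P^\top\|^2$, and $\sum_{j=1}^n \pi(\widetilde{h}(e_j,e_j))=n\pi(\widetilde{H})$.

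The remaining contribution is the Gauss quadratic sum $\sum_{r=n+1}^{n+p}\sum_{j=2}^n[\widetilde{h}^r_{11}\widetilde{h}^r_{jj}-(\widetilde{h}^r_{1j})^2]$. I would drop the nonpositive $(\widetilde{h}^r_{1j})^2$ terms and apply Lemma 2.5 of \cite{ZZS} (exactly as used in the proof of Theorem 3.3), which yields
\[
\sum_{r=n+1}^{n+p}\sum_{j=2}^n \widetilde{h}^r_{11}\widetilde{h}^r_{jj}\leq \frac{n^2}{4}\|\widetilde{H}\|^2.
\]
Combining this with the previous identity produces the claimed inequality.

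I do not expect any real obstacle: since $\widetilde{h}=\widehat{h}$ (Section~2), the algebraic inequality on the second fundamental form is literally the Chen--Lemma~2.5 bound with $\widetilde{h}$ in place of $h$, and all the $(\kappa,\mu)$-curvature bookkeeping has already been carried out in the contraction done for Theorem 4.1. The only care required is to keep track of signs in the $b^2\pi(X)^2$ term (which comes from the single index $j=1$ that is excluded from the Ricci sum, forcing the $-b^2\pi(X)^2$ correction) and in the $b\pi(\widetilde{h}(X,X))$ term (arising analogously from excluding the $j=1$ summand of $\sum_j b\pi(\widetilde{h}(e_j,e_j))=nb\pi(\widetilde{H})$).
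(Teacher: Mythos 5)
Your proposal is correct and is exactly the argument the paper intends: the paper gives no separate proof for this theorem, stating only that it follows ``similarly to Theorem 3.3,'' i.e.\ by setting $e_1=X$ in the contraction (4.1), summing over $j=2,\dots,n$, and bounding $\sum_r\sum_{j\ge 2}\widetilde{h}^r_{11}\widetilde{h}^r_{jj}$ by $\frac{n^2}{4}\|\widetilde{H}\|^2$ via Lemma~2.5 of \cite{ZZS}. Your bookkeeping of the $b$-correction terms also correctly produces $-b\lambda'$ (the trace of $\alpha'$), which exposes the ``$-b\lambda$'' in the printed statement as a typographical slip.
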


Similarly to Theorem 3.4, we have

\begin{thm}  Let $M^n$, $n\geq 3$, be an $n$-dimensional submanifold of a $(2m+1)$-dimensional $(\kappa,\mu)$-contact space form of constant $\varphi$-sectional curvature $N(c)$ endowed with a connection
$\overline{\widetilde{\nabla}}$, then: for any integer $k$, $2\leq k\leq n$, and any point $x\in M$, we have
\begin{align}
n(n-1)\|\widetilde{H}\|^2(x)&\geq n(n-1)\widetilde{\Theta}_k(x)-\frac{c+3}{4}{(n-1)n}-\frac{3(c-1)}{4}||\widehat{P}||^2+\frac{c+3-4\kappa}{2}(n-1)||\xi^\top||^2\\
&-\frac{1}{2}[||(\varphi h')^\top||^2-||(h')^\top||^2+|{\rm trace}((h')^\top)|^2-|{\rm trace}((\varphi h')^\top)|^2]\notag\\
&-2(n-1){\rm tr}((h')^\top)-2(1-\mu)[g(\xi^\top,h'\xi^\top)-{\rm tr}((h')^\top)||\xi^\top||^2]\notag\\
&+(n-1)b\lambda'-(n-1)b^2||P^\top||^2+b(n-1)n\pi(\widetilde{H}).\notag
\end{align}
\end{thm}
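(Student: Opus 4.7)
The plan is to mirror the argument used for Theorem 3.4, replacing all occurrences of the first connection $\overline{\nabla}$ by the second connection $\overline{\widetilde{\nabla}}$, and in particular replacing the scalar curvature identity (3.3) by its counterpart (4.2) established in the proof of Theorem 4.1. As in the previous section, define the normalized $k$-Ricci invariant $\widetilde{\Theta}_k$ by taking the infimum of $\frac{1}{k-1}\widetilde{\mathrm{Ric}}_L(X)$ over all $k$-plane sections $L\subset T_xM$ and unit vectors $X\in L$, so that the analogue of (3.10),
\[
\widetilde{\tau}(x)\geq \frac{n(n-1)}{2}\widetilde{\Theta}_k(x),
\]
follows tautologically from the definition and the expression of $\widetilde{\tau}$ as the sum of sectional curvatures in an orthonormal frame.

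First, I fix $x\in M$ and choose the local orthonormal frame $\{e_1,\dots,e_n,e_{n+1},\dots,e_{n+p}\}$ so that $e_{n+1}$ is aligned with the mean curvature vector $\widetilde{H}(x)$ and the shape operator $A_{n+1}$ is diagonal, say $A_{n+1}=\mathrm{diag}(a_1,\dots,a_n)$ with $\mathrm{tr}\,A_r=0$ for $r\geq n+2$. Since $\widetilde{h}=\widehat{h}$, this is exactly the same choice as in Theorem 3.4 and the classical identity
\[
n^2\|\widetilde{H}\|^2=\sum_{j=1}^n a_j^2+\Bigl(\sum_{j=1}^n a_j\Bigr)^2-\sum_{j=1}^n a_j^2,\qquad \|\widetilde h\|^2=\sum_{j=1}^n a_j^2+\sum_{r=n+2}^{n+p}\sum_{i,j=1}^n (\widetilde h^r_{ij})^2
\]
is available. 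The Cauchy--Schwarz estimate $\sum_{j=1}^n a_j^2\geq n\|\widetilde H\|^2$ then gives $n^2\|\widetilde H\|^2-\|\widetilde h\|^2\leq n(n-1)\|\widetilde H\|^2$, exactly as in the previous proof.

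Next, I rearrange identity (4.2) to isolate $n^2\|\widetilde{H}\|^2-\|\widetilde{h}\|^2$ on the right-hand side; the remaining terms are precisely those already appearing on the right of the asserted inequality with the correct signs, namely the curvature constant pieces $\tfrac{(n-1)n(c+3)}{4}$, $\tfrac{3(c-1)}{4}\|\widehat P\|^2$, $-\tfrac{c+3-4\kappa}{2}(n-1)\|\xi^\top\|^2$, the $h'$-type traces, and the new connection terms $-\tfrac{n-1}{2}b\lambda'+\tfrac{n-1}{2}b^2\|P^\top\|^2-\tfrac{b}{2}(n-1)n\pi(\widetilde H)$. Combining the displayed Cauchy--Schwarz bound with the inequality $\widetilde\tau\geq \tfrac{n(n-1)}{2}\widetilde\Theta_k$ and multiplying through by $2$ yields exactly (4.7).

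The routine but only slightly delicate step is the bookkeeping in passing from (4.2) to the stated inequality: one must make sure every term carried over from the second-connection Gauss equation (2.8), i.e.\ the $b$-linear trace $b\lambda'$, the quadratic $b^2\|P^\top\|^2$, and the $b\pi(\widetilde H)$ correction, appears with the right sign and multiplicity after doubling, and that the scalar-curvature inequality for $\widetilde\tau$ is applied after these terms have been moved to the right-hand side. No new analytic ingredient is required beyond Theorem 3.4; the core estimate is the same Cauchy--Schwarz bound on $\sum a_j^2$, and the only conceptual change is that the ``connection correction'' terms come from (2.8) rather than from (2.3).
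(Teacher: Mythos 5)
Your proposal is correct and follows exactly the route the paper intends: the paper gives no separate proof of this theorem, merely stating ``Similarly to Theorem 3.4,'' and your argument is precisely that adaptation --- replace the scalar curvature identity (3.3) by (4.2), keep the same adapted frame with $A_{n+1}$ diagonal and ${\rm trace}\,A_r=0$ for $r\ge n+2$, apply $\sum_j a_j^2\ge n\|\widetilde H\|^2$ and $\widetilde\tau\ge\frac{n(n-1)}{2}\widetilde\Theta_k$, and move the connection-correction terms $b\lambda'$, $b^2\|P^\top\|^2$, $b\pi(\widetilde H)$ across with doubled coefficients. The only blemish is presentational: the list of ``remaining terms'' mixes doubled and undoubled coefficients and states them with the signs they carry in (4.2) rather than after transposition, but you flag the bookkeeping explicitly and the final inequality is the correct one.
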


We denote the Casorati curvature with respect to $\overline{\widetilde{\nabla}}$ by
$\widetilde{\mathcal{C}}$. We denote the normalized $\delta$-Casorati curvature by $\widetilde{\delta}_c(n-1)$ and $\widetilde{\hat{\delta}}_c(n-1)$ with respect to $\overline{\widetilde{\nabla}}$.
Similarly to Theorem 3.5, we have
\begin{thm}  Let $M^n$, $n\geq 3$, be an $n$-dimensional submanifold of a $(2m+1)$-dimensional $(\kappa,\mu)$-contact space form of constant $\varphi$-sectional curvature $N(c)$ endowed with a connection
$\overline{\widetilde{\nabla}}$, then:

\indent {\rm (i)} The normalized $\delta$-Casorati curvature $\widetilde{\delta}_c(n-1)$ satisfies
\begin{align}
2\widetilde{\tau}\leq& n(n-1)\widetilde{\delta}_c(n-1)+\frac{c+3}{4}{(n-1)n}+\frac{3(c-1)}{4}||\widehat{P}||^2-\frac{c+3-4\kappa}{2}(n-1)||\xi^\top||^2\\
&+\frac{1}{2}[||(\varphi h')^\top||^2-||(h')^\top||^2+|{\rm trace}((h')^\top)|^2-|{\rm trace}((\varphi h')^\top)|^2]\notag\\
&+2(n-1){\rm tr}((h')^\top)+2(1-\mu)[g(\xi^\top,h'\xi^\top)-{\rm tr}((h')^\top)||\xi^\top||^2]\notag\\
&-(n-1)b\lambda'+(n-1)b^2||P^\top||^2-b(n-1)n\pi(\widetilde{H}).\notag
\end{align}
\indent {\rm (ii)} The normalized $\delta$-Casorati curvature $\widetilde{\hat{\delta}}_c(n-1)$ satisfies
\begin{align}
2\widetilde{\tau}\leq& n(n-1)\widetilde{\hat{\delta}}_c(n-1)+\frac{c+3}{4}{(n-1)n}+\frac{3(c-1)}{4}||\widehat{P}||^2-\frac{c+3-4\kappa}{2}(n-1)||\xi^\top||^2\\
&+\frac{1}{2}[||(\varphi h')^\top||^2-||(h')^\top||^2+|{\rm trace}((h')^\top)|^2-|{\rm trace}((\varphi h')^\top)|^2]\notag\\
&+2(n-1){\rm tr}((h')^\top)+2(1-\mu)[g(\xi^\top,h'\xi^\top)-{\rm tr}((h')^\top)||\xi^\top||^2]\notag\\
&-(n-1)b\lambda'+(n-1)b^2||P^\top||^2-b(n-1)n\pi(\widetilde{H}).\notag
\end{align}
\end{thm}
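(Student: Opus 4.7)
The plan is to mirror the proof of Theorem 3.5, using the scalar curvature identity for the second generalized semi-symmetric non-metric connection established in the proof of Theorem 4.1. From equation (4.2), after moving all non-$\widetilde{h}$ contributions to one side and rewriting $\sum_{r,i<j}\widetilde{h}^r_{ii}\widetilde{h}^r_{jj}$ in terms of the mean curvature and the Casorati curvature via
\[
\sum_{r=n+1}^{n+p}\sum_{1\le i<j\le n}\widetilde{h}^r_{ii}\widetilde{h}^r_{jj} \;=\; \tfrac{1}{2}\bigl(n^2\|\widetilde{H}\|^2 - \textstyle\sum_{r,i}(\widetilde{h}^r_{ii})^2\bigr),
\]
combined with $\sum_{r,i,j}(\widetilde{h}^r_{ij})^2 = n\widetilde{\mathcal{C}}$, I first obtain an identity analogous to (3.16):
\[
2\widetilde{\tau} \;=\; \Phi \;+\; n^2\|\widetilde{H}\|^2 \;-\; n\widetilde{\mathcal{C}},
\]
where $\Phi$ collects the contact, $h'$, and $b$-dependent terms exactly as they appear in the right-hand side of inequalities (4.8) and (4.9) (with the $\widetilde{\mathcal{C}}$-dependence stripped out).

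Next, following (3.17), I introduce for a hyperplane $L\subset T_xM$ the auxiliary quantity
\[
\widetilde{Q} \;=\; \tfrac{1}{2}n(n-1)\widetilde{\mathcal{C}} + \tfrac{(n-1)(n+1)}{2}\widetilde{\mathcal{C}}(L) - 2\widetilde{\tau} + \Phi,
\]
so that proving $\widetilde{Q}\ge 0$ for every hyperplane $L$ and then taking the infimum over $L$ immediately yields (4.8) via the definition of $\widetilde{\delta}_c(n-1)$. Substituting the identity above into $\widetilde{Q}$ reduces it to exactly the pure-shape-operator quadratic form handled by Lemma 4 of \cite{ZZ}, which gives $\widetilde{Q}\ge 0$. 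For part (ii), the only change is to use the coefficients $2n$ and $-(2n-1)/n\cdot\tfrac{1}{2}$ in the definition of the analogous $\widetilde{Q}'$ matching $\widetilde{\hat{\delta}}_c(n-1)$, so that supremum over $L$ is what appears; Lemma 4 in \cite{ZZ} again yields nonnegativity and hence (4.9).

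The computation is essentially bookkeeping once the identity of the shape-operator sector is isolated, since the Casorati argument is a purely pointwise algebraic inequality independent of the ambient curvature structure. The main obstacle is therefore purely notational: making sure that the terms coming from the second connection, specifically $-\tfrac{n-1}{2}b\lambda'$, $\tfrac{n-1}{2}b^2\|P^\top\|^2$, and $-\tfrac{b}{2}(n-1)n\pi(\widetilde{H})$ (together with the contact and $h'$ terms common to both sections), are moved correctly into $\Phi$ with the sign that produces the right-hand sides of (4.8) and (4.9); the inequality part is reduced verbatim to Lemma 4 of \cite{ZZ}. No equality discussion needs to be added beyond what Theorem 3.5 already records, since the statement of Theorem 4.4 does not require it.
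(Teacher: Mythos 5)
Your proposal is correct and is essentially the paper's own argument: the paper proves Theorem 4.4 simply by noting it is "similar to Theorem 3.5," i.e., one rewrites (4.2) as $2\widetilde{\tau}=\Phi+n^2\|\widetilde{H}\|^2-n\widetilde{\mathcal{C}}$, forms the auxiliary quantity $\widetilde{Q}$ analogous to (3.17), and invokes Lemma 4 of \cite{ZZ} before taking the infimum (resp.\ supremum) over hyperplanes $L$. Your bookkeeping of the $b$-dependent terms and the reduction to the pointwise shape-operator inequality match what the paper intends.
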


\vskip 1 true cm

\section{Acknowledgements}

The author was supported in part by  NSFC No.11771070.

\vskip 1 true cm

%-----------------------------------------------------------------------------
%-----------------------------------------------------------------------------

\bigskip
\bigskip

\noindent {\footnotesize {\it Y. Wang} \\
{School of Mathematics and Statistics, Northeast Normal University, Changchun 130024, China}\\
{Email: wangy581@nenu.edu.cn}

\end{document}